\documentclass[a4paper,12pt]{amsart}
\usepackage[utf8]{inputenc}
\usepackage{lmodern}
\usepackage{amsthm}
\usepackage{amssymb}
\usepackage{amsmath,amscd}
\usepackage{eufrak}

\title[Manin-Mumford and the maximal compact]{A Manin-Mumford theorem for the maximal compact subgroup of a universal vectorial extension of a product of elliptic curves}


\newtheorem{lem}{Lemma}[section]
\newtheorem{thm}{Theorem}[section]

\newtheorem{prop}{Proposition}[section]
\newcommand{\C}{\mathbb{C}}
\newcommand{\Q}{\mathbb{Q}}

\newcommand{\Z}{\mathbb{Z}}
\newcommand{\G}{\mathbb{G}}

\newcommand{\R}{\mathbb{R}}

\newcommand{\E}{\mathbb{E}}
\newcommand{\Ext} {\text{Ext}}

\newcommand{\X} {\mathcal{X}}
\renewcommand {\d}[2] {\frac{\partial #1}{\partial #2}}

\author{Gareth Jones}
\address{School of Mathematics, University of Manchester, Oxford Road, Manchester, M13 9PL, UK.} \address{gareth.jones-3@manchester.ac.uk}
\author{Harry Schmidt}
\address{harry.Schmidt@manchester.ac.uk}
\subjclass[2010]{Primary: 14H52, 14L10 Secondary: 14P10, 33E05}
\begin{document}

\maketitle
\begin{abstract} We study the intersection of an algebraic variety with the maximal compact subgroup of a universal vectorial extension of a product of elliptic curves. For this intersection we show a Manin-Mumford type statement. This answers some questions posed by Corvaja-Masser-Zannier which arose in connection with their investigation of the intersection of an algebraic curve with  the maximal compact subgroup of various algebraic groups.  In particular they proved that these intersections are finite for universal vectorial extensions of elliptic curves.  Using Khovanskii's zero-estimates combined with a stratification result of Gabrielov-Vorobjov and recent work of the authors, we obtain effective bounds for this intersection that only depend on the degree of the algebraic variety and the dimension of the group. As a corollary, we obtain new uniform results of Manin-Mumford type for additive extensions of certain abelian varieties.
\end{abstract}
\section{Introduction}			

The Manin-Mumford conjecture predicts that the intersection of a subvariety of a commutative algebraic group with the torsion points of the group is, in a precise sense, controlled by the group structure. This conjecture, was proved by Raynaud \cite{Raynaud1}, \cite{Raynaud2}, and the general form was proved by Hindry \cite{Hindry}. There are now several proofs, see for instance \cite{Hrushovski}, \cite{PZ}. More recently, Corvaja, Masser and Zannier \cite{CMZ} investigated the intersection of a subvariety with the euclidean closure of the torsion points in a commutative algebraic group over the complex numbers. This euclidean closure forms the maximal compact subgroup of the algebraic group. They considered in particular the case of additive extensions of elliptic curves.
												
For an abelian variety $A$ over $\C$ of dimension $g$, there is an extension $\E(A)$ of $A$ by the vector group $\G_a^g$ such that every other extension of $A$ by a vector group is a pushout of $\E(A)$. This extension is unique up to isomorphism and called the universal vectorial (or additive) extension  of $A$. For each abelian subvariety $B$ of $A$ there exists a universal vectorial extension $\E(B)$ of $B$ that is contained in $\E(A)$. This $\E(B)$ is then unique. These groups $\E(B)$ together with their translates are closed under intersections. We will focus on the case that our abelian variety is  a product of elliptic curves.

In what follows we will identify all our varieties with their complex-valued points. 

Our main result is an extension of a result due to Corvaja, Masser and Zannier \cite[Theorem 1]{CMZ}.
											
\begin{thm}\label{theorem} Let $A$ be a product of $g$ elliptic curves and $\E(A)$ a universal vectorial extension of $A$. Let $C$ be the maximal compact subgroup of $\E(A)$.   Further let $V \subset \E(A)$ be an algebraic subvariety of dimension at most $g$. Then there exists a natural number $N$ such that
 \begin{align*}
V \cap C \subset \bigcup_{i = 1}^N(t_i + \E(B_i))
\end{align*}
where the $B_i$ are proper abelian subvarieties of $A$ and $t_i \in C$.
\end{thm}

Consider the case $g=1$. Then we have $\E(A)$ the universal vectorial extension of an elliptic curve $A$ and $V\subseteq \E(A)$ is a curve. In this case the theorem says that there are at most finitely many points on $V$ lying in the maximal compact subgroup of $\E(A)$. This is the result of Corvaja, Masser and Zannier mentioned above. 
Their paper was the main motivation for our work.\\

We could also ask similar questions about other groups, for instance $\G_m^2$. In fact, Corvaja, Masser and Zannier \cite[p.228]{CMZ} already pointed out that there are examples of curves in $\G_m^2$ that have infinite intersection with $S^1\times S^1$ but that are not contained in a translate of an algebraic subgroup. 
 In this connection it is worth noting that the maximal compact subgroup of $\G_m^2$ is semialgebraic, whereas in the case of our $\E(A)$ above, the maximal compact subgroup is not semialgebraic. As further evidence of the transcendental nature of the maximal compact subgroup, we note that in the case that an elliptic curve is defined over the algebraic numbers, the only algebraic points on the maximal compact subgroup of its universal extension are the torsion points.
This was shown by Bertrand for real points \cite[Théorème 3.1]{Bertrandmaximal} and by Bost and Künnemann \cite[Theorem 3.1.2]{BK} in general.\\

The requirement in our theorem that $\dim V$ be at most $g$ appears at first sight undesirable. But this condition is necessary. This can be seen as follows. Let $\pi$ be the projection of $\E(A)$ to $A$ and let $V^{\pi}$ be  an irreducible curve in $A$. The inverse image $V = \pi^{-1}(V^{\pi})$ is an algebraic variety of dimension $g+1$ and for each point $p$ of $V^{\pi}$ we can find a point in $C$ that projects down to $p$. Thus the intersection of $V$ with $C$ is infinite and if $V^{\pi}$ does not lie in a translate of an abelian subvariety the intersection $V\cap C$ is also not contained in a finite union of translates of universal vectorial extensions strictly contained in $\E(A)$.\\

We also point out that Theorem \ref{theorem} implies a theorem about all additive extensions of $A$. In order to formulate it, we recall \cite{Brion} that an algebraic variety is anti-affine if the ring of regular functions is trivial. Each additive extension $G$ of $A$ is isogenous to a product $G_{\text{anti}} \times \G_a^k$ where $G_{\text{anti}}$ is anti-affine and $k$ is a nonnegative integer. The maximal compact subgroup of $G_{\text{anti}}\times \G_a^k$ is $C_{\text{anti}}\times \{0\}$ for $C_{\text{anti}}$ the maximal compact subgroup of $G_{\text{anti}}$. So in the investigation of intersections of algebraic varieties with the maximal compact subgroup we can restrict our attention to anti-affine groups.
   
\begin{thm}\label{additiveextensions} Let $G$ be an anti-affine extension of a product of elliptic curves  $A$ by $\G_a^l$. Let $V$ be an algebraic subvariety of $G$ of dimension at most $l$ and $C_G$ the maximal compact subgroup of $G$. There exists a natural number $N$ such that 
\begin{align*}
V\cap C_G \subset \bigcup_{i=1}^N(t_i + H_i), 
\end{align*} 
where the $H_i$ are additive extensions of proper abelian subvarieties of $A$ contained in $G$.
\end{thm}
The condition on $G$ to be anti-affine ensures that $l\leq g$. Arguing as above we see that the condition $\dim(V) \leq l$ is necessary.  \\

To state our next results it is convenient to fix a model for $\E(A)$. For each elliptic curve $E$ over $\C$, we fix a Weierstrass model with invariants $g_2$ and $g_3$. There is a corresponding projective model of $\E(E)$, described on page 245 of \cite{CMZ}. It is given by $\E(E)=\overline{\E(E)}\setminus L$, with $\overline {\E(E)}\subseteq \mathbb{P}^4$ the projective surface defined by
\begin{align}
X_0X_2^2 =  4X_1^3 -g_2X_0^2X_1 - g_3X_0^3, ~~  X_0X_4 - X_2X_3 = 2X_1^2, \label{model}
\end{align}
and $L$ the line defined by $X_0=X_1=X_2=0$. Using this embedding we can embed $\E(A)$ in multiprojective space $\left( \mathbb{P}_4\right)^g$. This embedding comes with a notion of degree. We use a rather simple-minded definition. For an algebraic variety $V$ in $\E(A)$ we define its degree of definition 
to be the minimal number $\delta$ such that its Zariski-closure $\overline{V}^{Zar} \subset (\mathbb{P}^4)^g$ is defined by  multiprojective polynomials of multi-degree at most $(\delta, \dots, \delta)$. Corvaja, Masser and Zannier showed \cite[Theorem 6]{CMZ} that for $A =E$ an elliptic curve, the bound on the cardinality of the intersection of $V$ with the maximal compact subgroup can be taken to depend only on the elliptic curve and the degree of $V$. They also asked if this result could be made effective. Here, using our recent work on pfaffian definitions of elliptic functions \cite{ourpaper}, we answer this positively and also show that the bound can be taken independently of $A$. In fact, we obtain the following effective uniform version of Theorem \ref{theorem}.
\begin{thm} \label{quant} Let $V$ be as in Theorem \ref{theorem} with degree $\delta$. Then there exist effectively computable constants $c,c'$ depending only on $\dim(A)$ such that  Theorem \ref{theorem} holds with 
\begin{align*}
N\leq c\delta^{c'}.
\end{align*}

\end{thm}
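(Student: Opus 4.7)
The strategy is to revisit the proof of Theorem \ref{theorem} and track all numerical dependencies. First, I would invoke our recent work on pfaffian definitions of elliptic functions to obtain a real-analytic parametrization $\Phi : D \to \mathcal{G}$ of the maximal compact $\mathcal{C}$, where $D \subset \R^{2g}$ is a bounded real box and the coordinates of $\Phi$, expressed in the multiprojective embedding of $\mathcal{G}$ fixed before Theorem \ref{quant}, are pfaffian functions of a format bounded effectively in terms of $g$ alone. Crucially, this format must be independent of the particular elliptic curves $E_i$ entering into $\mathcal{E}$, so that no dependence on moduli leaks into the final bound.

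With $\Phi$ in hand, the preimage $\Phi^{-1}(V) \subset D$ is a semi-pfaffian set whose defining data are the multiprojective polynomials cutting out $V$, pulled back through the pfaffian coordinates of $\Phi$. The polynomial degree enters this pullback linearly in $\delta$, so Khovanskii's zero-estimates bound the number of connected components of $\Phi^{-1}(V)$ by $c_1' \delta^{c_2'}$ with constants effectively computable and depending only on $g$. Applying the Gabrielov--Vorobjov stratification then refines this into a decomposition of $\Phi^{-1}(V)$ into smooth semi-pfaffian strata, again with the number of strata bounded polynomially in $\delta$ with effective constants depending only on $g$.

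For each stratum $S$, the image $\Phi(S) \subseteq V \cap \mathcal{C}$ is a connected real-analytic set. By the same local argument that underlies Theorem \ref{theorem}, each $\Phi(S)$ is contained in a translate $t + H$ with $t \in \mathcal{C}$ and $H$ a universal vectorial extension of an abelian subvariety of $\mathcal{E}$ contained in $\mathcal{G}$. Selecting one such translate per stratum covers $V \cap \mathcal{C}$ by a number of translates no larger than the stratum count, and hence by at most $c_1 \delta^{c_2}$ translates, as claimed.

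The main obstacle is the very first step: the Weierstrass $\wp$, $\zeta$, and auxiliary exponential functions involved in the Lie-theoretic description of $\mathcal{G}$ are not globally pfaffian on $\C$, so the format of $\Phi$ on $D$ must be controlled uniformly in the moduli of the $E_i$. This uniformity, established in our earlier work, is precisely what makes the constants $c_1, c_2$ depend on $g$ alone. Once it is in place, the remaining steps are essentially the combinatorial applications of Khovanskii and Gabrielov--Vorobjov that are by now standard in this context.
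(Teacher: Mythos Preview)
Your approach is essentially the same as the paper's: describe $V\cap\mathcal{C}$ via a piecewise semi-pfaffian set using the uniform pfaffian description of $\wp,\zeta$ from \cite{ourpaper}, apply the Gabrielov--Vorobjov stratification, and then invoke Proposition~\ref{prop} together with analytic continuation so that each connected smooth stratum lies in a single translate $t+H$. The only difference is an ordering detail: the paper applies Khovanskii's bound \emph{after} the stratification, because the Gabrielov--Vorobjov strata are smooth but not a priori connected (so your sentence ``the image $\Phi(S)$ is a connected real-analytic set'' needs exactly this extra step), whereas your preliminary Khovanskii step on $\Phi^{-1}(V)$ is redundant; once reordered, the two arguments coincide.
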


If a translate $t_i +H_i$ in Theorem \ref{theorem} contains a torsion point then we can choose $t_i$ to be a torsion point. So Theorem \ref{quant} implies a uniform and effective result of Manin-Mumford type for $\E(A)$ under the assumption that the variety considered has dimension at most the dimension of the product of elliptic curves $A$. In the case that $V$ is a surface we obtain a uniform and effective version of Manin-Mumford (see Theorem \ref{surface}).

 This is somewhat similar to the results obtained by Hrushovksi and Pillay \cite{HP}, which were strengthened by Binyamini \cite{Bi}. They obtain explicit uniform bounds when intersecting subvarieties of semiabelian varieties all defined over $\overline{\Q}$ with finite rank subgroups. Their bound only applies to transcendental points. Our bound applies to all points, including $\overline{\Q}$-rational points, but we consider additive extensions of elliptic curves, rather than semiabelian varieties.  By the same argument our result also implies a uniform and effective version of Mordell-Lang type for finite rank subgroups of $C$.\\
 
If we restrict our attention to isolated points in the intersection $V\cap C$ we can avoid the use of the stratification theorem of Gabrielov-Vorobjov \cite{GVstrat} and obtain an explicit bound.

\begin{thm}\label{iso} Let $V, C$ be as  in Theorem \ref{theorem}. The number $N_{\text{iso}}$ of isolated points in $V\cap C$ is bounded by
\begin{align*}
N_{\text{iso}}\leq 2^{42g^2+126g }g^{30g}\max\{ 3,\delta\}^{21g}.
\end{align*}
\end{thm}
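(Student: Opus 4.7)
The plan is to reduce the statement to counting isolated zeros of a pfaffian system on a box in $\mathbb{R}^{2g}$, and then to apply Khovanskii's zero-estimate. Since we only need isolated points, we bypass the stratification theorem of Gabrielov--Vorobjov that was needed for the full Theorem \ref{theorem}; this is what allows the bound to be made entirely explicit.

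First I would parameterize $\mathcal{C}$ analytically. Writing $\mathcal{G}(\mathbb{C}) \cong \mathbb{C}^{2g}/\Lambda$ for a rank-$2g$ lattice $\Lambda$ formed from the periods of the $E_i$, the maximal compact $\mathcal{C}$ is the image of the real span $\Lambda_{\mathbb{R}}$ modulo $\Lambda$. Choosing a real basis of $\Lambda$ gives a surjection from the unit box $[0,1]^{2g}$ onto $\mathcal{C}$. Pulling back the multiprojective coordinates on $\mathcal{G} \subset \mathbb{P}_4^g$, the entries of this parameterization are given by explicit rational expressions in $\wp_i, \wp_i', \zeta_i$ evaluated at real linear combinations of the periods. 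By our recent work on pfaffian definitions of elliptic functions, each of these Weierstrass functions, restricted to a compact fundamental region, admits a pfaffian chain of explicitly bounded complexity; the $\zeta$ function requires a little care because it is only quasi-periodic, but its pfaffian description on a fundamental parallelogram is available.

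Secondly, the polynomials defining $V$ pull back to polynomial combinations of pfaffian functions of degree proportional to $\delta$ on $[0,1]^{2g}$. Each isolated point of $V\cap\mathcal{C}$ then corresponds to finitely many isolated zeros of this pfaffian system, with the number of preimages controlled by a constant depending only on $g$, coming from the number of charts needed to cover $\mathcal{C}$ together with the degree of the Lie-theoretic covering map.

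Finally, I would apply the explicit Khovanskii zero-estimate for a system of pfaffian equations in $n=2g$ real variables, expressed in terms of the chain length $r$, the degree of the chain, and the polynomial degree controlled by $\delta$. Substituting the pfaffian complexity estimates from our previous work and doing the arithmetic should produce a bound of the advertised shape. The hard part is the precise book-keeping of pfaffian complexity --- in particular, arranging matters so that a single pfaffian chain can be chosen which simultaneously expresses the Weierstrass data for all $g$ factors on the chosen fundamental region, and so that the dependencies on $\delta$ enter only through the polynomial degree --- so that the constants come out as $2^{42g^2 + 126g} g^{30g}$ and the exponent on $\delta$ is $21g$ rather than something larger.
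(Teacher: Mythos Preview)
Your plan is essentially the paper's own proof: pfaffian descriptions of $(\wp,\zeta)$ from the earlier work, pull back the equations of $V$, and apply Khovanskii's bound, avoiding the Gabrielov--Vorobjov stratification because only isolated points are needed. Two implementation details in the paper differ slightly from your sketch and are worth noting. First, the exponential involves $\wp$, which has a pole at the lattice points, so one cannot literally parametrize $\mathcal{C}$ by the closed box $[0,1]^{2g}$; the paper instead works with the graph of $(\wp,\zeta)$ on the punctured fundamental parallelogram $F^0$ and handles the identity element in each factor separately, giving the $2^g$ affine charts (for each $k$ either $X_{0k}=1$ or the projection to $G_k$ is the identity). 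Second, because the description of the graph is \emph{piecewise} semi-pfaffian (format coming from \cite{ourpaper}), the resulting sets $V_S$ are not single pfaffian varieties, and the relevant Khovanskii input is the bound on the number of \emph{connected components} (\cite[Corollary~3.3]{GV}) rather than a zero-count for a single system; each isolated point of $V\cap\mathcal{C}$ lies in the projection of exactly one such component, and the stated numerical bound drops out of the format bookkeeping.
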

We will later show (see Theorem \ref{surface}) that, for  $V$ an irreducible surface, $N_{\text{iso}}$ is also a bound for $N$ in Theorem \ref{theorem}. And for $V$ a curve all points in $V\cap C$ are isolated by \cite[Theorem 1]{CMZ}.
This has some concrete consequences for families of polynomial Pell equations. If we consider  $D=X^3Q$ where $Q\in \C(T)[X] $ is a polynomial of degree 3 with coefficients in the function field of a complex curve $T$, then we can think of the equation

\begin{align}\label{pell}
A^2-D_tB^2=1; ~~A,B\in \C[X], B\neq 0
\end{align}
where $D_t$ is the specialization of $D$ at (a suitable) $t \in T(\C)$, as a family of Pellian equations parametrized by $T$. Families of this kind were studied extensively by Masser and Zannier (see for example \cite{simple}) and they introduced a method to study the qualitative behavior of such families of equations. Now, if the family of elliptic curves defined by $Y^2=Q$ is isotrivial and \eqref{pell} does not have a generic solution then  Theorem \ref{iso} provides an explicit bound, depending only on the degree of $T$, for the number of $t \in T(\C)$ such that \eqref{pell} has a solution (see  \cite[Proposition 1.2]{Pell} and its proof in Section 3 of \cite{Pell}).\\

We expect our method to extend to certain real subgroups of universal vectorial extensions of abelian varieties. This will be carried out in later work. However the effectivity is then not so clear. We also expect the method to work in other situation such as intersections of algebraic varieties  with certain real subtori of abelian varieties. \\


This is how the rest of this article is organized. In the next section we recall some basic facts about endomorphisms and isogenies between elliptic curves and their universal additive extensions. We then prepare the setting for the proof of Theorem \ref{theorem} and reduce it to a proposition. Then in Section \ref{transcendence} we prove some lemmas in preparation for the proof of Theorem \ref{theorem}. In Section \ref{proof} we prove Theorem \ref{theorem} and Theorem \ref{additiveextensions}. Finally in Section \ref{pfaffian} we show how to derive the explicit bound for $N_{\text{iso}}$ in Theorem \ref{theorem} from \cite{ourpaper} and Khovanskii's zero-estimates \cite[Corollary 3.3]{GV} and the effective bound for $N$ using also the stratification theorem of Gabrielov-Vorobjov.   \\

\section{Additive extensions of abelian vareities}\label{isogenies}
In this section we recall some facts about universal vectorial extensions and elliptic curves. Our main source is Serre's book \cite{Serre}. \\

Given a commutative algebraic group $G$, we denote by $T_O(G)$ the tangent space at the identity element $O$ of $G$. Suppose $A$ is an abelian variety. By \cite[Theorem 7, VII]{Serre} there is a canonical isomorphism  $\Ext^1(A,\G_a) \rightarrow H^1(A, \mathcal{O}_A)$ between the the group of extensions of $A$ by $\G_a$ and the group of sheaves on $A$ up to isomorphism. It is known that $H^1(A, \mathcal{O}_A)$ is isomorphic to $T_O(A^{\vee})$  where $A^{\vee}$ is the dual of $A$. For this latter fact see \cite[p.37, (e)]{Milne} and \cite[Proposition 2.1, III]{Milne}.  Thus  for any fixed $l \geq 1$ we obtain an isomorphism between $\Ext^1(A, \G_a^{l}) = \Ext^1(A,\G_a)^{l}$ and  $T_O((A^{\vee})^{l})$.  Any homomorphism $\varphi:A \rightarrow B$  induces a dual morphism $\varphi^{\vee}:B^{\vee} \rightarrow A^{\vee}$
and we define the pullback $\varphi^*:\Ext^1(B,\G_a^{l}) \rightarrow \Ext^1(A,\G_a^{l})$ such that  $\varphi^*$ corresponds to $(d\varphi^{\vee})^l:T_{O}((B^{\vee})^{l}) \rightarrow T_{O}((A^{\vee})^{l})$. \\
Let $G_1 \in \Ext^1(A, \G_a^{l_1}), G_2 \in \Ext^1(B, \G_a^{l_2})$ and $\varphi: G_1 \rightarrow G_2$ be a morphism. Then $\varphi$ gives rise to  exactly one pair $ (\varphi_{lin}, \varphi_{ab}) $ with $\varphi_{lin}:\G_a^{l_1} \rightarrow \G_a^{l_2}$, $\varphi_{ab}: A \rightarrow B$ such that $\varphi_{ab}^*(G_2) = (\varphi_{lin})_*(G_1)$. See \cite[p.162-163]{Serre} for details of these construction. \\

Now we can define a universal vectorial extension $\E(A)$ of an abelian variety $A$. By the above, a basis $b_1,\ldots,b_g$ of $T_O(A^\vee)$ determines an extension class in $\Ext^1(A,\mathbb{G}^g_a)$ which we denote by $\E(A)$. From the construction it is easy to see that $\E(A)$ is unique up to isomorphism since another choice of the basis  leads to an automorphism of $\G_a^g$. Moreover it satisfies a universal property.  For every extension $G$ of $A$ by $\G_a^l$ there exist unique maps $\gamma_1,\gamma_2$ that make the following diagram commute

\begin{align}\label{diagram}
\begin{CD}
0 @>>> \G_a^g @> >> \E(A) @>\pi >> A@ >>> 0 \\
 @.         @VV\gamma_1V @VV\gamma_2 V @VV id V @. \\
0@>>> \G_a^l @> >> G @> >> A@ >>> 0
\end{CD}
\end{align}

Using this universal property, if $A = A_1\times A_2$ then $\E(A) = \E(A_1)\times \E(A_2)$. Now given a morphism $\psi: A \rightarrow B$ we can find a unique lift $\tilde{\psi}: \E(A) \rightarrow \E(B)$ such that $\tilde{\psi}_{ab} = \psi$ as follows. We consider $\psi^*(\E(B)) \in \Ext(A,\G_a^g)$ which comes with a unique $\chi: \psi^*(\E(B)) \rightarrow \E(B)$ and the diagram above provides us with a unique morphism $\gamma_2: \E(A) \rightarrow \psi^*(\E(B))$ and we can set $\tilde{\psi} = \chi\circ \gamma_2$.  This provides us with a unique lift of $\psi$. \\

Before we proceed with the preparations for the proof of Theorem \ref{theorem} we make some more comments on isogenies and endomorphisms.\\
For elliptic curves $E_1$ and $E_2$ with fixed models we fix bases for their periods $(\omega_{1,1},\omega_{1,2})$ and $(\omega_{2,1},\omega_{2,2})$ in $\C^2$, respectively. Here and below we always assume that bases for periods are chosen such that the quotient of the second period in the basis by the first period in the basis lies in the upper halfplane.
 The differential $(d\psi)_O$ of an isogeny $\psi: E_1 \rightarrow E_2$ at $O$ acts by multiplication by a complex number $\alpha$ on $T_OE_1 = T_OE_2 = \C$. This action is such that  
\begin{align*}
\alpha (\omega_{1,1},\omega_{1,2})=(\omega_{2,1},\omega_{2,2}) \rho (\alpha)
\end{align*}
where $\rho(\alpha) \in GL^+_2(\Q)\cap M_2(\Z)$ (here $M_2(\Z)$ is the algebra of matrices with integer coefficients). For the universal extensions of the elliptic curves $\E(E_1)$  we get a period matrix
\[ 
\begin{pmatrix}
\omega_{1,1} & \omega_{1,2} \\
\eta_{1,1} &\eta_{1,2}
\end{pmatrix} \in  GL_2(\C)
\]
with 
$\eta_{1,i}$ the quasiperiod associated to $\omega_{1,i}$. This period matrix is such that the kernel of the exponential map of $\E(E_1)$ is given by 
\[
\begin{pmatrix}
\omega_{1,1} & \omega_{1,2} \\
\eta_{1,1} &\eta_{1,2}
\end{pmatrix} \Z^2.
\]
There is a similar period matrix for $E_2$, for which we use the natural notation. For all this see \cite[3.6]{CMZ}. For each elliptic curve $E$ we fix once and for all a period matrix $P_E$ as described above. We will sometimes omit the dependence on $E$. 
  The differential of the lift $\tilde{\psi}$ of $\psi$ (as defined in the paragraph following \eqref{diagram}) acts on $ \C^2$ as multiplication by a matrix $\lambda (\alpha) \in GL_2(\C)$ from the left. For the period matrices, this action is such that
\begin{align}\label{switch1}
\lambda (\alpha) \begin{pmatrix}
\omega_{1,1} & \omega_{1,2} \\
\eta_{1,1} &\eta_{1,2}
\end{pmatrix} =\begin{pmatrix}
\omega_{2,1} & \omega_{2,2} \\
\eta_{2,1} &\eta_{2,2}
\end{pmatrix}\rho(\alpha).
\end{align}     

In what follows we will write $[\alpha]$ for the isogeny  $[\alpha]:\E(E_1)\rightarrow \E(E_2)$ such that $d([\alpha]_{ab})_O = \alpha$.  It is well-known that the endomorphism ring of an elliptic curve is either $\Z$ or an order in a quadratic imaginary field $K$. After tensoring with $\Q$ the relation (\ref{switch1}) defines a map $\rho:K \rightarrow M_2(\Q)$ with image lying in $GL_2^+(\Q)\cup \{0\}$.\\


We will frequently use complex conjugation. The sets and varieties involved in our proof will all be subsets of powers of $\C^2\times \mathbb{P}^4$. Given a subset $V$ of  $\C^2\times \mathbb{P}^4$ we write $V^h$ for the image of $V$ under the usual complex conjugation map given on the two factors by
\[
(z_1,z_2)\mapsto (\overline{z}_1,\overline{z}_2)
\]
and
\[
(z_0:z_1:z_2:z_3:z_4)\mapsto (\overline{z}_0:\overline{z}_1:\overline{z}_2:\overline{z}_3:\overline{z}_4).
\]
And we will also use this notation for subsets of powers of $\C^2\times \mathbb{P}^4$.

We pause to note that if $E_1$ is isogenous to the complex conjugate $E_2^h$ of $E_2$ then 
\begin{align}\label{switch2}
\lambda(\alpha) \begin{pmatrix}
\omega_{1,1} & \omega_{1,2} \\
\eta_{1,1} &\eta_{1,2}
\end{pmatrix} = \begin{pmatrix}
\overline{\omega}_{2,1} & \overline{\omega}_{2,2} \\
\overline{\eta}_{2,1} &\overline{\eta}_{2,2}
\end{pmatrix}S
\end{align}
where $S \in GL_2^-(\Q)\cap M_2(\Z)$. That the determinant of $S$ is negative follows from the relation \[
\alpha(\omega_{1,1},\omega_{1,2}) =(\overline{\omega}_{2,1},\overline{\omega}_{2,2}) S.
\]
  The relations $(\ref{switch1})$ and $(\ref{switch2})$ are crucial for the proof of Theorem \ref{theorem} as they allow us to switch the action of the isogenies from the left to the right.
Note that we can write $\E(A^h) = \E(A)^h$ which we will repeatedly do.

\section{Setting for the proof of Theorem \ref{theorem}}\label{setting}
As in Theorem \ref{theorem} we set $A$ to be a product of elliptic curves, $C$ the maximal compact subgroup (of the complex-valued points) of $\E(A)$. Given an elliptic curve $E$ we write $C_E$ for the maximal compact subgroup of $\E(E)$. For the proof of Theorem \ref{theorem}  we use several properties of the exponential map
 \begin{align*}
 \exp_{\E(E)}:\C^2 \rightarrow \E(E),\\
 \end{align*}
where we have fixed an identification $T_O\mathbb{E}(A) = \C^2$. 
 
We note that
\begin{align*}
\exp_{\E(E)}(P\R^2) = C_E
\end{align*}
and we see that $C_E$ is a real analytic manifold of dimension 2. 
\\

As isogenies of complex algebraic groups send their respective maximal compact subgroups to each other, the statement of the theorem is isogeny invariant, so we can assume that 
\begin{align}\label{product}
A = E_1^{g_1}\times \cdots \times E_n^{g_n}
\end{align}
 where no pair of $E_i, E_j$ with $ i \neq j$ are isogenous and we then have that $\E(A) = \E(E_1)^{g_1}\times \dots\times \E(E_n)^{g_n}$. 

The set $C$ is a compact subanalytic subset of $\E(A)$ (viewing the latter as a real-analytic manifold), and $V$ is algebraic. So $V \cap C$ has only finitely many connected components (for instance by analytic cell decomposition in the structure $\mathbb{R}_{an}$ for which one can see for example  \cite[0.4]{Miller}). We will show that each such component is contained in a finite union of translates of universal vectorial extensions that is strictly contained in $\E(A)$.\\
 For points this is trivial. So it is enough to consider a  component of $V \cap C$ of dimension $d\geq 1$. We pick a smooth point of this component and $U_0\subset \C^{2g}$ such that $\exp_{\E(A)}(U_0) = U$ is an open neighbourhood  of our smooth point in $V \cap C$. (The existence of a smooth point is well-known, and follows for instance from analytic cell decomposition in the structure $\mathbb{R}_{an}$  \cite[0.4]{Miller}.) We can assume that $U_0$ is parametrized by real analytic functions
\begin{align*}
 (z_1,w_1, \dots,z_g, w_{g}): B_0 \rightarrow \C^{2g}
\end{align*}
where $B_0$ is an open ball in $\R^d$. We define functions 
\[
p_1,q_1, \dots, p_{g}, q_{g}: B_0 \to \R
\]
 by
\begin{align}\label{betti}
 \left(\begin{matrix} z_{k} \\w_{k} \end{matrix} \right) =  \begin{pmatrix}
\omega_{k,1} & \omega_{k,2} \\
\eta_{k,1} &\eta_{k,2}
\end{pmatrix} \left( \begin{matrix} p_k \\ q_ k\end{matrix} \right) 
,~~ k=1, \dots ,g
\end{align}
where
$$
 \begin{pmatrix}
\omega_{k,1} & \omega_{k,2} \\
\eta_{k,1} &\eta_{k,2}
\end{pmatrix}
$$
  is the period matrix of the $k$-th elliptic curve in the product of $A$. Note that the functions defined by \eqref{betti} do indeed take real values on $B_0$ as $U$ is contained in $C$. 
We call these functions the Betti coordinates of $U_0$. For a study of similar Betti maps in a different context,  and some discussion of the terminology and of related concepts, see \cite{CMZ2}.\\

Complex conjugation is a continuous isomorphism between $\E(A)$ and $\E(A^h)$ and so $C^h$ is the maximal compact subgroup of the group $\E(A^h)$.  For the exponential maps we have

\begin{align*}
\overline{\exp_{\E(A)}(z_1,w_1, \dots, z_g,w_g)} = \exp_{\E(A^h)}(\overline{z}_1, \overline{w}_1, \dots, \overline{z}_g,\overline{w}_g).
\end{align*}

Of course $U_0^h$ is parametrized by $\tilde{z_1},\tilde{w_1}, \dots,\tilde{z_g} ,\tilde{w_{g}} $ with
\begin{align}\label{conjbetti}
\left(\begin{matrix} \tilde{z}_{k} \\ \tilde{w}_{k} \end{matrix} \right) = \begin{pmatrix}
\overline{\omega}_{k,1} & \overline{\omega}_{k,2} \\
\overline{\eta}_{k,1} &\overline{\eta}_{k,2}
\end{pmatrix}\left(\begin{matrix}p_k \\ q_k \end{matrix}\right), k=1,\dots, g,
\end{align}
as on $B_0$, the functions $p_1,q_1, \dots,p_g,q_g$ are real-valued. This will be an essential ingredient of our argument. \\

We pause to note that in what follows we will always work with the fibred product $\Delta(U_0\times U_0^h)\times\Delta( U\times U^h) \subset U_0\times U_0^h \times\exp_{\E(A)\times \E(A)^h}(U_0\times U^h_0)$.  Here we write $\Delta(S_1\times S_2) = \{(x,y) \in S_1\times S_2: x = \overline{y}\}$ for complex analytic sets $S_1, S_2$.  This will  be clear throughout as we work with a fixed parametrization (namely by the real ball $B_0$ above)  of our sets.

Now suppose that $U$ is contained in a universal vectorial extension strictly contained in $\E(A)$. If $M$ is a connected real-analytic manifold containing $U$ and of the same dimension as $U$, then $M$ too is contained in the same universal vectorial extension, by analytic continuation. By analytic cell-decomposition in the structure $\mathbb{R}_{\text{an}}$, the intersection $V\cap C$ is a finite union of analytic cells (see for instance \cite[0.4]{Miller}). So, in order to establish Theorem \ref{theorem}, it is sufficient to show the following.

\begin{prop}\label{prop} The set $U$ is contained in a universal vectorial extension strictly contained in $\E(A)$.
\end{prop}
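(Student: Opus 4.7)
The plan is to lift $U$ via the exponential map, complexify the Betti parametrisation, and apply Bertrand's Ax--Schanuel for universal vectorial extensions inside the larger group $\mathcal{G}\times \overline{\mathcal{G}}$, then descend to $\mathcal{G}$ using the reality built into $\mathcal{C}$.

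First I would complexify. The Betti functions $p_1,q_1,\ldots,p_g,q_g$ are real-analytic and real-valued on $B_0\subset \R^d$, so they extend to holomorphic functions $P_1,Q_1,\ldots,P_g,Q_g$ on a complex neighbourhood $\tilde{B}\subset \C^d$ of $B_0$. Using the formulas (\ref{betti}) and (\ref{conjbetti}) with $P_k,Q_k$ in place of $p_k,q_k$ produces holomorphic $Z_k,W_k,\tilde{Z}_k,\tilde{W}_k$ on $\tilde{B}$. Since these agree with the parametrisations of $U_0$ and $\overline{U_0}$ on $B_0$, analytic continuation yields $\exp_{\mathcal{G}}(Z,W)(\tilde{B})\subset V$ and $\exp_{\overline{\mathcal{G}}}(\tilde{Z},\tilde{W})(\tilde{B})\subset \overline{V}$. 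Setting $\Phi=(Z,W,\tilde{Z},\tilde{W}):\tilde{B}\to \C^{4g}$, the product structure makes $\mathcal{G}\times \overline{\mathcal{G}}$ the universal vectorial extension of $\mathcal{E}\times \overline{\mathcal{E}}$, and $\exp_{\mathcal{G}\times\overline{\mathcal{G}}}\Phi(\tilde{B})\subset V\times \overline{V}$.

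Next I would control the transcendence degrees. By the Legendre relation $\omega_{1k}\eta_{2k}-\omega_{2k}\eta_{1k}=\pm 2\pi i$ each $2\times 2$ period block is invertible, so $P_k,Q_k\in \C(Z_k,W_k)$ and $\C(\Phi)=\C(P_1,Q_1,\ldots,P_g,Q_g)$. Since our basepoint is a smooth point of the $d$-dimensional component $U$, the real map $(p,q)$ is an immersion on $B_0$, forcing its holomorphic extension to have complex rank $d$, so $\mathrm{trdeg}_{\C}\,\C(\Phi)=d$. Bertrand's Ax--Schanuel applied inside $\mathcal{G}\times \overline{\mathcal{G}}$ then gives a lower bound $\mathrm{trdeg}_{\C}\,\C(\Phi,\exp\Phi)\geq \dim H_{\min}+d$ for the smallest universal vectorial subextension $H_{\min}$ whose translate contains $\exp\Phi(\tilde{B})$, while the containment in $V\times\overline{V}$ of dimension at most $2g$ forces $\mathrm{trdeg}_{\C}\,\C(\Phi,\exp\Phi)\leq d+2g$. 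Combining, $\dim H_{\min}\leq 2g<4g$, so $H_{\min}$ is proper and corresponds to a proper abelian subvariety $\mathcal{B}\subsetneq \mathcal{E}\times \overline{\mathcal{E}}$.

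Finally I would descend to $\mathcal{G}$. Projecting the containment $\{(x,\overline{x}):x\in U\}\subset t+H_{\min}$ onto the first factor places $U$ inside a translate of $\pi_1(H_{\min})$, the universal extension of $\pi_1(\mathcal{B})\subset \mathcal{E}$. If $\pi_1(\mathcal{B})\subsetneq \mathcal{E}$, we are done. The main obstacle is the case where $\pi_1|_{\mathcal{B}}$ is an isogeny, so that $\mathcal{B}$ is the graph of an isogeny $\alpha:\mathcal{E}\to \overline{\mathcal{E}}$ and $H_{\min}$ the graph of its induced lift $\tilde{\alpha}:\mathcal{G}\to \overline{\mathcal{G}}$; the containment then becomes the real-analytic relation $\overline{x}=\tilde{\alpha}(x)+c$ on $U$. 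I would resolve this using the compactness of $\mathcal{C}$ (any closed compact subgroup of a vector group is trivial, so the image of $U$ in a quotient by a suitable universal vectorial subextension is a single point) combined with the non-isogeny hypothesis on the distinct $E_i$, which severely restricts the possible $\alpha$. This descent step, rather than the Ax--Schanuel itself, is expected to be the most delicate part of the argument and is what makes passage to $\mathcal{G}\times \overline{\mathcal{G}}$ (in order to access the conjugate data holomorphically) necessary.
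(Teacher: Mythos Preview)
Your first two paragraphs are essentially what the paper does in its Section~\ref{setting}: complexify the Betti coordinates, pass to $\mathcal{G}\times\overline{\mathcal{G}}$, and apply Bertrand's Ax--Schanuel to force the image into a proper universal subextension $H$. One caveat: you invoke Ax--Schanuel in the form $\mathrm{trdeg}\ge \dim H_{\min}+d$, while the version stated in the paper (Theorem~\ref{Bertrand}) only gives $\mathrm{trdeg}\ge \dim H_{\min}+1$. With the weaker form the paper obtains only $\dim H\le 4g-2$, not $\dim H\le 2g$, so the bad case is not simply ``$H$ is the graph of an isogeny'' but a general $H$ surjecting onto both factors. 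Your reduction to the graph case therefore hinges on a sharper Ax--Schanuel than the one at hand.

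The genuine gap is your third paragraph. You correctly identify the descent as the crux, but ``compactness of $\mathcal{C}$'' does not do the work you want: the quotient of $\mathcal{G}$ by a universal subextension is again a universal extension, not a vector group, and its maximal compact is nontrivial, so there is no reason the image of $U$ collapses to a point. The non-isogeny hypothesis on the $E_i$ is likewise not enough by itself --- even for a single CM elliptic curve (so $n=1$, $E\sim\overline{E}$) the graph case must be handled, and there is no isogeny restriction available. What is actually needed is to translate the relation $\overline{x}=\tilde{\alpha}(x)$ (or more generally the defining equations of $H$) back through the period matrices into \emph{rational} linear constraints on the real Betti coordinates $p_k,q_k$. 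This is where the paper spends most of its effort.

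Concretely, the paper proves rank lemmas (Lemmas~\ref{rank1} and \ref{rank2}) for block matrices of the form $A(m_{st})+A(\tilde m_{st})B$ built from the endomorphism embedding $A:K\hookrightarrow GL_2(\Q)$ and the integer matrix $B$ encoding the isogeny to the conjugate; these show that the relations defining $H$ force $\mathrm{trdeg}_{\C}(p_1,q_1,\ldots,p_g,q_g)\le \tfrac12\dim H$ (Lemmas~\ref{ii}, \ref{ij}). The proof then runs by induction on $g+\dim V$: if the image of $U$ under the homomorphism $\mathcal{L}:\mathcal{G}\to\mathcal{G}'$ determined by the left-hand sides of the $H$-relations has small Zariski closure, apply the inductive hypothesis in $\mathcal{G}'$; otherwise bound $\mathrm{trdeg}_{\C}(y)\le \tfrac12\dim H$ as well, whence $\mathrm{trdeg}_{\C}(x,y)\le \dim H$, and a \emph{second} application of Theorem~\ref{Bertrand} (now with the minimality of $H$) gives the contradiction. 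None of this machinery is visible in your sketch; the descent step is not a short coda but the substance of the proof.
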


We now complexify the functions $p_1,q_1,\dots,p_g,q_g $ and so by \eqref{betti} and \eqref{conjbetti} also obtain complex extensions of $z_1,w_1, \dots,z_g,w_{g}, \tilde{z_1},\tilde{w_1},\dots,\tilde{z_g},  \tilde{w_{g}}$. Since the image of the real-analytic map \[
\exp_{\E(A)\times \E(A^h) }(z_1,w_1, \dots,z_g,w_{g}, \tilde{z_1},\tilde{w_1},\dots,\tilde{z_g},  \tilde{w_{g}})
\]
lies in the complex algebraic variety $V\times V^h$ the same is true of the image of the complexification. So writing 
\[
x = (z_1,w_1, \dots,z_g,w_{g}, \tilde{z_1},\tilde{w_1},\dots,\tilde{z_g},  \tilde{w_{g}})
\]
and
\[
y = \exp_{\E(A)\times\E(A^h)}(x)
\]
we have
\[
\text{trdeg}_\C(y) \le 2\dim (V).
\]
And by \eqref{betti} and \eqref{conjbetti} we have 
\begin{align}\label{tdx}
\text{trdeg}_\C(x) \leq \text{trdeg}_\C(p_1,\ldots,p_g,q_1,\ldots,q_g) \leq 2g.
 \end{align}
Combining these we see that
\[
\text{trdeg}_\C(x,y)\le 2g+2\dim (V).
\]
Now we apply \cite[Proposition 1.b]{Bertrand} which states that 

\begin{align}
\text{trdeg}_\C(x,y)\ge \dim{H_y}+1.\label{ineq}
\end{align}
for the smallest translate of a universal vectorial extension $H_y$ containing $y$.

Combining the last two inequalities, and using our crucial assumption that $\dim (V)\le g$, we see that
\[
\dim{H_y}\le 4g-1.
\]
But the dimension on the left is even, and so we have
\[
\dim{H_y}\le 4g -2.
\]

It follows that $\Delta(U\times U^h)$ is contained in a translate of a  universal vectorial extension strictly contained in $\E(A)\times \E(A^h)$.  Translating $V$ by a point in $C$ we can assume that  $U$ contains the origin. So in what follows $\Delta(U \times U^h)$ is contained in a universal vectorial extension $H$ in $\E(A)\times\E(A^h)$ of positive codimension and we assume that $H$ is minimal with this property.\\

This is enough to prove Theorem \ref{theorem} in certain cases. For suppose that there is no elliptic curve in the product defining $A$ that is isogenous to the complex conjugate of another curve in the product. By a theorem of Kolchin (see \cite[Lemma 7]{MW}) the projection of $H$ to $A\times A^h$ factors as a product of subgroups of $A$ and $A^h$. And one of these must be a proper subgroup, from which it follows that $U$ is contained in a universal vectorial extension strictly contained in $\E(A)$, thus establishing Proposition \ref{prop} in this special case.\\

From now on we will assume that $U$ is not contained in a  universal  extension in $\E(A)$ of positive codimension. The same then holds for $U^h$ in $\E(A^h)$. \\

To ease notation  we will abbreviate universal vectorial extension by universal extension and a universal vectorial extension that is strictly contained in another by universal subextension. \\

\section{Transcendence degree}\label{transcendence}

For an elliptic curve $E$ with complex multiplication we let $K$ be the CM field of $E$. If $E$ does not have complex multiplication we set  $K$ to be equal to some imaginary quadratic field. This is only  to make the proofs uniform. The following lemma is central to the proof of Theorem \ref{theorem}. Recall from \eqref{switch1} that $\rho:K\to M_2(\Q)$ has image lying in $GL^+_2(\Q)\cup\{0\}$.

\begin{lem}\label{rank1}
Let $M$ and $\tilde{M} $ be two $r \times l$ matrices with entries $m_{st}$ and $\tilde{m}_{st}$ in $K$, respectively, where  $  s = 1, \dots, r$ and  $t = 1, \dots, l$. Suppose that $M$ has rank $r$. If $S $ is a rational  $2\times 2$ matrix with negative determinant then the $2r\times 2l$ integer matrix $\hat{M}$ defined block-wise by
\begin{align*}
\hat{M}_{st} = \left(\rho(m_{st})  +\rho(\tilde{m}_{st}) S \right)
\end{align*}
has rank at least $r$.
\end{lem}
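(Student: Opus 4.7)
My plan is to identify $\mathbb{Q}^2$ with $K$ via the basis $1, \sqrt{D}$. The normalization \eqref{AD} is precisely what makes $A(k)$ correspond, under this identification, to multiplication by $k$ in $K$. Any $\mathbb{Q}$-linear endomorphism of $K$ has a unique expression $y \mapsto \alpha y + \beta \overline{y}$ for some $\alpha, \beta \in K$, where $\overline{\cdot}$ is the nontrivial Galois automorphism. Applied to $B$, this turns $\hat{M}$ into the matrix of the $\mathbb{Q}$-linear map $T: K^r \to K^e$ given by
\[ T(y) = M y + \tilde{M}(\alpha y + \beta \overline{y}), \]
and we wish to show $\mathrm{rank}_{\mathbb{Q}} \hat{M} \geq r$, equivalently $\dim_{\mathbb{Q}} \ker T \leq r$.

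A direct computation in the basis $1, \sqrt{D}$ shows $\det B = N(\alpha) - N(\beta)$, where $N$ is the norm form of $K$. Since $K$ is imaginary quadratic, $N$ is positive semidefinite, so the hypothesis $\det B < 0$ forces $\beta \neq 0$; this is the only point at which the sign of $\det B$ is used.

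Next I would base change along the two embeddings of $K$ into $\mathbb{C}$, giving $K \otimes_{\mathbb{Q}} \mathbb{C} \cong \mathbb{C} \times \mathbb{C}$. The induced map $T \otimes 1$ is then represented by the $2e \times 2r$ matrix
\[ \tilde{N} = \begin{pmatrix} M + \alpha \tilde{M} & \beta \tilde{M} \\ \overline{\beta}\, \overline{\tilde{M}} & \overline{M} + \overline{\alpha}\, \overline{\tilde{M}} \end{pmatrix}, \]
whose rank equals that of $\hat{M}$. Assume for contradiction $\mathrm{rank}\, \tilde{N} \leq r - 1$. Then its top $e$ rows, forming the $e \times 2r$ matrix $(M + \alpha \tilde{M} \mid \beta \tilde{M})$, have rank at most $r - 1$, so its kernel in $K^{2r}$ has dimension at least $r + 1$. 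But the $K$-linear map $(y, z) \mapsto (y, \alpha y + \beta z)$ is an isomorphism of $K^{2r}$ (this is where $\beta \neq 0$ enters), and it intertwines $(M + \alpha \tilde{M} \mid \beta \tilde{M})$ with $(M \mid \tilde{M})$. Hence $(M \mid \tilde{M})$ also has kernel of dimension $\geq r + 1$ and rank $\leq r - 1$, contradicting $\mathrm{rank}\, M = r$ since $M$ is a submatrix of $(M \mid \tilde{M})$.

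The step I expect to require the most care is the base change itself: one must check that the left action of $A(\tilde{m}_{st})$ and the right action of $B$ within each block interact correctly with the decomposition $K \otimes_{\mathbb{Q}} \mathbb{C} \cong \mathbb{C} \times \mathbb{C}$ so that the matrix $\tilde{N}$ comes out as displayed. Once this is in hand, the finish is a short exercise in linear algebra over $K$.
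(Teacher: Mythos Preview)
Your argument is correct, and it is genuinely different from the paper's proof. The paper argues directly over $\mathbb{Q}$: it first observes that $R \cap RB = 0$ (this is where $\det B<0$ is used), deduces that the ``block-rows'' $\hat{M}^{(t)}=A(M_t)+A(\tilde M_t)B$ are $K$-linearly independent, then separates the odd and even rows $\omega_t,\epsilon_t$ of $\hat{M}$ and proves the combinatorial inequality $d_\epsilon+d_\omega\ge r+\dim(V_\epsilon\cap V_\omega)$ by manipulating explicit integer relations and the auxiliary matrix $A_D$. Your route is more structural: you interpret $\hat{M}$ as the $\mathbb{Q}$-linear map $T(y)=My+\tilde M(\alpha y+\beta\bar y)$ on $K^r$, use $\det B=N(\alpha)-N(\beta)<0$ to force $\beta\neq 0$, base change along $K\otimes_{\mathbb{Q}}\mathbb{C}\cong\mathbb{C}\times\mathbb{C}$, and then observe that the top $e$ rows of the resulting block matrix are $(M+\alpha\tilde M\mid\beta\tilde M)$, which is column-equivalent to $(M\mid\tilde M)$ since $\beta\neq0$ and hence has rank $\ge r$. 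This is shorter and makes the role of the hypothesis $\det B<0$ completely transparent; the paper's argument, on the other hand, avoids the base-change bookkeeping entirely and stays over $\mathbb{Q}$ throughout. The step you flag as delicate --- that $\bar{\cdot}\otimes\mathrm{id}$ becomes the swap on $\mathbb{C}\times\mathbb{C}$ and that left multiplication by $A(m)$ becomes scalar multiplication by $(\sigma(m),\bar\sigma(m))$ --- is routine once stated and yields exactly the matrix $\tilde N$ you wrote down.
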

\begin{proof}The matrix $\hat{M}$ consists of pairs of row vectors $\hat{M}^{(1)},\ldots,\hat{M}^{(r)} \in \text{Mat}_{2,2l}(\Q)$ arranged vertically, with each $\hat{M}^{(s)}$ consisting of $2\times 2$ matrices $\rho(m_{st})+\rho(\tilde{m}_{st})S$ with rational coefficients arranged horizontally as $t=1,\ldots,l$.\\

We first claim that if $\alpha_1,\ldots,\alpha_r\in K$ are such that 
\begin{align}\label{lindep}
\sum_{s=1}^r\rho(\alpha_s)\hat{M}^{(s)}=0
\end{align}
then $\alpha_1=\cdots=\alpha_r=0$. To see this, first note that if \eqref{lindep} holds, then for each $t=1,\ldots,l$ we have
\begin{align*}
0 =& \sum_{s=1}^r\rho(\alpha_s)\rho(m_{st})+\rho(\alpha_s)\rho(\tilde{m}_{st})S\\
=& X_t+Y_t S
\end{align*}
where
\[
X_t= \rho\left( \sum_{s=1}^r \alpha_s m_{st}\right)
\]
and
\[
Y_t=\rho\left( \sum_{s=1}^r \alpha_s \tilde{m}_{st}\right).
\]
So for each $t=1,\ldots,l$ we have
\[
\det (X_t)=\det(Y_t)\det(S).
\]
But for $\alpha\in K$, the determinant of $\rho(\alpha)$ is the norm of $\alpha$ from $K$ to $\Q$, and so is nonnegative, and is zero if and only if $\alpha=0$. As $\det (S)<0$, we must have $X_t=Y_t=0$.  In particular,
\[
\sum_{s=1}^r\alpha_s m_{st}=0.
\]
As $M$ has rank $r$, we must have $\alpha_1=\cdots=\alpha_r=0$, proving the claim.\\

To prove the lemma, consider the $\Q$-vector space 
\[\mathbb{U}=\{ v \in \text{Mat}_{2,2r}(\Q) : v\hat{M}=0\}.\]
 This has dimension twice the dimension of the kernel of $\hat{M}^t$, so $\dim \mathbb{U}= 2(2r-\text{rank}(\hat{M}))$. So it suffices to show that $\dim \mathbb{U}\le 2r$. Suppose that $\dim \mathbb{U}>2r$. Since $[K:\Q]=2$, the $\Q$-vector space
\[
\mathbb{V}=\left\{ \left( \rho(\alpha_1),\ldots,\rho(\alpha_r) \right) \in \text{Mat}_{2,2r}(\Q) : \alpha_1,\ldots,\alpha_r\in K\right\}
\]
has dimension $2r$. So
\begin{align*}
\dim \mathbb{U}\cap \mathbb{V} =&\dim \mathbb{U} +\dim \mathbb{V}-\dim (\mathbb{U}+ \mathbb{V})\\
>&2r+2r-4r=0.
\end{align*}
Hence there is some $v=\left( \rho(\alpha_1),\ldots, \rho(\alpha_r)\right) \in \mathbb{U} \setminus \{ 0\}$, and in particular, $v\hat{M}=0$, that is,
\[
\rho(\alpha_1)\hat{M}^{(1)}+\cdots +\rho(\alpha_r)\hat{M}^{(r)}=0.
\]
But this contradicts the claim above. 
\end{proof}
We also record the following well-known fact as a  lemma. 

\begin{lem} \label{rank2} Let $M$ be a matrix of rank $r$ with entries  $m_{st} \in K, s =1, \dots, r, t =1,\dots,l$. Then the $2r\times2l$ integer matrix $\hat{M}$ defined blockwise by
\begin{align*}
\hat{M} = (\rho(m_{st}))_{st}
\end{align*}
has rank $2r$.
\end{lem}
\begin{proof}
Since $M$ has rank $r$ there is an $r\times r$ submatrix of $M$ with non-zero determinant $D \in K$. Taking the image of this matrix by $\rho$ in $\hat{M}$ we can evaluate its determinant block-wise \cite[pp. 546-547]{Bourbaki} and obtain that it is equal to $\text{det}( \rho(D))$. This vanishes if and only if $D =0$. 

\end{proof}

We now consider the setting of Theorem \ref{theorem}, and work towards the proof of Proposition \ref{prop}. So we have $\E(A) = \E(E_1)^{g_1}\times \dots \times \E(E_n)^{g_n}$, with $E_i$ not isogenous to $E_j$ for $i\ne j$.\\ 

We will use Lemma \ref{rank1} and Lemma \ref{rank2} to bound the transcendence degree of $x$ from above (recall that $x$ is defined in the discussion following Proposition \ref{prop}). So we improve the estimate in \eqref{tdx}. We first consider the projections $\hat{H}$ of $H$ to products $\E(E_i)^{g_i} \times \E(E_i^h)^{g_i}$ such that $\E(E_i)$ and $\E(E_i^h)$ are isogenous and $\hat{H}$ is a universal subextension of $\E(E_i)^{g_i} \times \E(E_i^h)^{g_i}$. In order to ease notation we set $E_i =E$, $\E(E_i) = G$ and  $g_i = e$. We further denote by $P$ the period matrix of $E$ that we have fixed. By abuse of notation we will renumber  $p_1,q_1, \dots, p_e,q_e$ to be the Betti coordinates of the projection of the coordinates of  $U_0$  to the tangent space at the identity of $G^{e}$.

\begin{lem} \label{ii} Let $\hat{x}=(p_1, q_1,\dots,p_{e}, q_{e})$ and $\hat{H}$ be as described above. Then
\begin{align*}
\text{trdeg}_\C(\hat{x}) \leq \frac12 \dim(\hat{H}) .
\end{align*}
\end{lem}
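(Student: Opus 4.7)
The strategy is to translate the condition that the Betti coordinates $\hat{x}$ of $U$ (projected to the tangent space of $G^e\times\overline{G}^e$) lie in the tangent space $V_{\hat{H}}$ of $\hat{H}$ into at least $r := 2e - f$ independent $\Q$-linear relations on $\hat{x}$, where $f := \tfrac{1}{2}\dim\hat{H}$. This will force the complexified $\hat{x}$ to lie in a subspace of $\C^{2e}$ of complex dimension at most $2e - r = f$, giving $\text{trdeg}_{\C}(\hat{x}) \leq f$.

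First, I would identify $\hat{H}$ as the canonical lift of the abelian subvariety $\mathcal{A} := \pi(\hat{H}) \subset E^e \times \overline{E}^e$ of dimension $f$. Since $E$ and $\overline{E}$ are isogenous, fix an isogeny $(\alpha_0)\colon \overline{E}\to E$, set $B_0 := B(\alpha_0)$, and realise $\mathcal{A}$ as the connected component of the kernel of a surjective homomorphism $\phi = (\phi_j)_{j=1}^r$ with components
\[
\phi_j(x,y) = \sum_{k=1}^{e}\alpha_{jk}(x_k) + \beta_{jk}(y_k),
\]
where $\alpha_{jk}\in\mathrm{End}(E)\otimes\Q$ and $\beta_{jk} = \gamma_{jk}\circ\alpha_0$ with $\gamma_{jk}\in\mathrm{End}(E)\otimes\Q$. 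From $L(\beta_{jk})\overline{P} = L(\gamma_{jk})L(\alpha_0)\overline{P} = L(\gamma_{jk})P B_0 = P\,A(\gamma_{jk})B_0$ one gets $B(\beta_{jk}) = A(\gamma_{jk})B_0$.

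Second, the condition $U\subset\hat{H}$ forces the lift $\tilde{\phi}\colon G^e\times\overline{G}^e\to G^r$ to have vanishing derivative on the tangent vectors coming from the Betti parametrization. Substituting $(z_k,w_k)=P(p_k,q_k)^T$ and $(\tilde{z}_k,\tilde{w}_k)=\overline{P}(p_k,q_k)^T$ and using $L(\alpha)P=PA(\alpha)$ and $L(\beta)\overline{P}=PB(\beta)$ to pull the common invertible factor $P$ outside, the system reduces, after cancelling $P$, to the $r$ two-component equations
\[
\sum_{k=1}^{e}\bigl(A(\alpha_{jk}) + A(\gamma_{jk})B_0\bigr)\begin{pmatrix}p_k\\ q_k\end{pmatrix}=0, \quad j=1,\ldots,r.
\]

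Third, I would verify $\det B_0 < 0$: taking determinants in $L(\alpha_0)\overline{P} = PB_0$, using $\det L(\alpha_0)=|\lambda_{\alpha_0}|^2 > 0$ (since $L(\alpha_0)$ is upper/lower triangular with diagonal $\lambda_{\alpha_0}$ and its conjugate) and the Legendre relation $\det P = -2\pi i$ giving $\det\overline{P}/\det P = -1$, yields $\det B_0 = -|\lambda_{\alpha_0}|^2 < 0$. We are then exactly in the setup of Lemma~\ref{rank1}, with $M$ and $\tilde{M}$ the $e\times r$ matrices over $K$ given by $m_{kj}=\alpha_{jk}$, $\tilde{m}_{kj}=\gamma_{jk}$, and $\hat{M}$ the coefficient block matrix of the displayed system. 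Lemma~\ref{rank1} yields $\mathrm{rank}(\hat{M})\geq r$, so $\hat{x}$ satisfies at least $r$ independent $\Q$-linear relations, whence $\text{trdeg}_{\C}(\hat{x}) \leq 2e - r = f = \tfrac{1}{2}\dim(\hat{H})$.

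The main obstacle is ensuring the hypothesis $\mathrm{rank}(M)=r$ in Lemma~\ref{rank1}. This fails when $\mathcal{A}$ contains a nontrivial pure part $\mathcal{A}\cap(E^e\times\{0\})$, since then the $E$-coefficients of any cutting-out $\phi_j$'s span at most $e-\dim(\mathcal{A}\cap(E^e\times\{0\})) < r$ over $K$. I expect the resolution to proceed by splitting $\phi$ according to whether the $\phi_j$'s involve the $\overline{E}$-factor nontrivially: the block of $\phi_j$'s involving only $\overline{E}$ contributes to $\hat{M}$ through entries $A(\tilde{m}_{kj})B_0$ whose rank is controlled by faithfulness of $A$ and invertibility of $B_0$, while the mixed block is handled by Lemma~\ref{rank1} proper; the condition $B_0\notin A(K)$ (equivalent to $\det B_0 < 0$) is essential, as it guarantees the two block contributions are $\Q$-linearly independent, yielding total rank at least $r$.
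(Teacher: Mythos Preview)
Your approach is essentially the paper's: express $\hat{H}$ as the kernel of $r=2e-\tfrac12\dim\hat{H}$ endomorphism-relations, pull them back to the tangent space via $L(\alpha)P=PA(\alpha)$ and $L(\alpha_0)\overline{P}=PB_0$, and then invoke Lemma~\ref{rank1}. Your computation of $\det B_0<0$ via the Legendre relation is correct, and the reduction to the block system in $(p_s,q_s)$ is exactly what the paper does (the paper writes the resulting relations ``$\bmod\ \C^2$'' to account for the constant of integration, which is harmless for the transcendence-degree count).

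The gap is where you say it is, but the resolution is much simpler than the splitting you outline, and your proposed splitting does not obviously work. The point you are missing is the standing hypothesis set up at the end of Section~\ref{setting}: one is arguing by contradiction toward Proposition~\ref{prop}, so it is assumed that $U$ is \emph{not} contained in any proper universal subextension of $\mathcal{G}$, and likewise $\overline{U}$ in $\overline{\mathcal{G}}$. Under this assumption, both matrices $M=(m_{st})$ and $\tilde M=(\tilde m_{st})$ automatically have rank $r$. Indeed, if $\operatorname{rank}M<r$ then some $K$-linear combination of the defining relations involves only the $\overline{G}$-coordinates, i.e.\ yields a nontrivial relation on $\overline{U}$ alone; this forces $\overline{U}$ (hence $U$) into a proper universal subextension, contrary to hypothesis. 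Equivalently, in your language: the assumption forces $\mathcal{A}$ to project onto both $E^e$ and $\overline{E}^e$, so $\dim(\mathcal{A}\cap(E^e\times\{0\}))=f-e$ and hence the $K$-rank of the $E$-coefficient matrix is exactly $e-(f-e)=2e-f=r$. No separate treatment of ``pure'' and ``mixed'' components is needed; once you invoke the standing assumption, Lemma~\ref{rank1} applies directly.
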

\begin{proof} Recall from the discussion following the statement of Proposition \ref{prop} that we assume that $U$ is not contained in a universal subextension.  
 We denote by $Q_s$ the projection from $U$ to the sth copy of $G$ in the product $G^e$ and similarly by $\tilde{Q}_s$ the projection from $ U^h$ to the sth copy of  $G^h$,  for $s=1,\ldots, e$.
 Let $r = 2e - \frac12 \dim(\hat{H})$. As $U$ is not contained in a universal subextension of $\E(A)$, the group $\hat{H}$ is defined by relations of the following form
\begin{align}
\sum_{s=1}^{e}[m_{st}]\circ Q_{s} = -\sum_{s=1}^{e}[\tilde{m}_{st}] \circ [\alpha]\circ \tilde{Q}_{s},~~ t=1, \dots,r, ~~ m_{st}, \tilde{m}_{st} \in K \label{relationH}
\end{align}
where $[\alpha]$ is an isogeny from $G^h$ to $G$ and $M= (m_{st})$ and $ \tilde{M} = (\tilde{m}_{st})$ are matrices of rank $r$ with entries in $K$. This is because if the rank of either $M$ or $\tilde{M}$ is less than $r$ we find in both cases that the projection of $H$ to $G^e$ is contained in a proper algebraic subgroup of $G^e$, which in turn contradicts our assumption on $U$.   
 Recall that we have
\begin{align}
\lambda(\alpha)P^h= PS\label{rl}
\end{align}
 as in (\ref{switch2}). By taking the logarithm, one can check that on $\Delta(U_0\times U_0^h)$ the relation (\ref{relationH}) translates to a relation of the following form
\begin{align*}
\sum_{s= 1}^{e}\left(\lambda(m_{st})P\left(\begin{array}{c} p_{s} \\ q_{s}\end{array}\right) + \lambda(\tilde{m}_{st})\lambda(\alpha)P^h\left(\begin{array}{c} p_{s} \\ q_{s}\end{array}\right)\right) = 0~ \mod \C^2.
\end{align*}
Now using (\ref{rl}) and (\ref{switch1}) we obtain, after multiplying by $P^{-1}$ (which exists by the Legendre relation),
\begin{align*}
\sum_{s= 1}^{e}\left(\rho(m_{st}) + \rho(\tilde{m}_{st})S\right)\left(\begin{array}{c} p_{s} \\ q_{s}\end{array}\right) = 0~ \mod \Z^2
\end{align*}
for $t=1,\dots,r$. By Lemma \ref{rank1} this implies that $\text{trdeg}_\C(p_{1},q_{1}, \dots, p_{e}, q_{e}) \leq 2e - r$.
\end{proof}

We need one further lemma to treat the projections of $H$ to products $\E(E_i)^{g_i}\times \E(E_j^h)^{g_j}, i \neq j$,  for which $E_i$ and $E^h_j$ are isogenous. Now let $\hat{H}$ be the projection of $H$ to $\E(E_i)^{g_i}\times \E(E_j^h)^{g_j}$.  We set $e = g_i, \tilde{e} = g_j, G= \E(E_i)$ and $\tilde{G}= \mathbb E(E^h_j)$. We further denote by $(p_1,q_1, \dots, p_e,q_e)$ and $(\tilde{p}_1,\tilde{q}_1, \dots, \tilde{p}_{\tilde{e}},\tilde{q}_{\tilde{e}})$ the Betti coordinates of the projection of the coordinates of $U_0$ to the tangent space at the identity of $G^e$ and $\tilde{G}^{\tilde{e}}$, respectively. 

\begin{lem} \label{ij} Let $\hat{x} =(p_1,q_1, \dots, p_e,q_e, \tilde{p}_1,\tilde{q}_1, \dots, \tilde{p}_{\tilde{e}},\tilde{q}_{\tilde{e}})$ and $\hat{H}$ be as above. Then
\begin{align*}
\text{trdeg}_\C(\hat{x}) \leq  \dim(\hat{H}) .
\end{align*}

\end{lem}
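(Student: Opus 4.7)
The plan is to mirror the proof of Lemma \ref{ii}, using Lemma \ref{rank2} in place of Lemma \ref{rank1}. Set $r = (e + \tilde e) - \tfrac12 \dim(\hat H)$, so that $\hat H$ has codimension $2r$ in $G_i^{e} \times \overline G_j^{\tilde e}$. Since $G_i$ and $\overline G_j$ are isogenous, their endomorphism algebras lie in a common field $K$; fix an isogeny $[\alpha]: \overline G_j \to G_i$. By the Kolchin-type description used in Lemma \ref{ii}, the universal subextension $\hat H$ is defined by $r$ relations
\[\sum_{s=1}^{e} [m_{st}] Q_s + \sum_{u=1}^{\tilde e} [\tilde m_{ut}] ([\alpha] \tilde Q_u) = 0, \quad t = 1, \ldots, r,\]
with $m_{st}, \tilde m_{ut} \in K$ and both $M = (m_{st})$ and $\tilde M = (\tilde m_{ut})$ of full rank $r$ over $K$. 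As in Lemma \ref{ii}, the rank condition follows from our standing hypothesis: if $M$ had rank less than $r$, the projection of $\hat H$ to $G_i^{e}$ would lie in a proper subgroup, and pulling back would give a proper universal subextension of $\mathcal{G}$ containing $U$, a contradiction.

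Next, I would translate these algebraic identities into relations among Betti coordinates. Using Definition \ref{defendo} to move each $[m_{st}]$ and $[\tilde m_{ut}]$ from a left tangent-space action to a right action on the period lattice, together with the identity $L(\alpha) \overline{P_j} = P_i B$ (with $B$ an integer $2\times 2$ matrix, the analogue of (\ref{rl})) to do the same for $[\alpha]$, each of the $r$ relations takes the form
\[\sum_{s=1}^{e} A(m_{st}) \binom{p_s}{q_s} + \sum_{u=1}^{\tilde e} A(\tilde m_{ut}) B \binom{\tilde p_u}{\tilde q_u} \equiv 0 \pmod{\C^2}.\]

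Now collect these $r$ relations into a single $2r \times 2(e+\tilde e)$ real coefficient matrix $N$. Its first $2e$ columns form -- up to block transposition -- precisely the matrix to which Lemma \ref{rank2} applies with our $M$, and so have rank $2r$; hence $N$ itself has rank $2r$. This gives $2r$ linearly independent $\Q$-linear constraints (modulo constants in $\C^2$) on the entries of $\hat x$, forcing $\mathrm{trdeg}_\C(\hat x) \le 2(e + \tilde e) - 2r = \dim(\hat H)$.

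The main point of care, inherited from Lemma \ref{ii}, is the justification that $M$ has full rank $r$ via the minimality assumption on $U$; this is essentially the only subtle step. Beyond that, the argument is actually cleaner than for Lemma \ref{ii}: because Lemma \ref{rank2} yields the full rank $2r$ (rather than merely $r$ as in Lemma \ref{rank1}), no information about the sign of $\det B$ is needed here. This also accounts for the improved bound $\dim(\hat H)$ versus $\tfrac12\dim(\hat H)$: the Betti coordinates $(p,q)$ and $(\tilde p, \tilde q)$ come from two non-isogenous factors $G_i, G_j$ and are genuinely independent, whereas in Lemma \ref{ii} the single set $(p,q)$ parametrised both $U_0$ and $\overline{U_0}$.
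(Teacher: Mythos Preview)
Your proposal is correct and follows essentially the same route as the paper's own proof: set $r = e+\tilde e - \tfrac12\dim(\hat H)$, write the $r$ defining relations for $\hat H$ via an isogeny $[\alpha]$, pass to Betti coordinates using $L(\alpha)\overline{P_j}=P_iB$ and Definition~\ref{defendo}, and then invoke Lemma~\ref{rank2} on the block matrix $(A(m_{st}))$ to get $2r$ independent linear constraints, yielding $\mathrm{trdeg}_\C(\hat x)\le 2(e+\tilde e)-2r=\dim(\hat H)$. One small slip: if $M$ has rank $<r$ the resulting nontrivial relation is among the $\tilde Q_u$ (hence constrains the $\overline{G}_j^{\tilde e}$ factor, and via conjugation the $G_j^{g_j}$ factor of $\mathcal G$), not $G_i^e$ as you wrote---but the contradiction with the standing hypothesis on $U$ is the same either way.
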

\begin{proof}
Similarly as for Lemma \ref{ii} we set $r = e +\tilde{e} - \frac12\dim(\hat{H})$. Let $(Q_1, \dots, Q_{e}$, $ \tilde{Q}_1, \dots, \tilde{Q}_{\tilde{e}})$ be as in Lemma \ref{ii}.  They satisfy relations of the following form
\begin{align}
\sum_{s=1}^{e}[m_{st}]\circ Q_{s} = -\sum_{\tilde{s}=1}^{\tilde{e}}[\tilde{m}_{\tilde{s}t}] \circ [\alpha]\circ \tilde{Q}_{\tilde{s}},~~ t=1, \dots,r, ~~ m_{st}, \tilde{m}_{\tilde{s}t} \in K \label{relation}
\end{align}
where $[\alpha]$ is an isogeny from $\tilde{G}$ to $G$. We note as in the proof of Lemma \ref{ii} that both matrices $(m_{st})$ and $ (\tilde{m}_{\tilde{s}t})$ have rank $r$ because of our assumption on $U$.
As in Lemma \ref{ii} this translates to a relation on $U_0 \times U_0^h$ of the following form
\begin{align*}
\sum_{s= 1}^{e}\rho(m_{st})\left(\begin{array}{c} p_{s} \\ q_{s}\end{array}\right) +
 \sum_{\tilde{s}=1}^{\tilde{e}} \rho(\tilde{m}_{\tilde{s}t})S\left(\begin{array}{c} \tilde{p}_{\tilde{s}} \\ \tilde{q}_{\tilde{s}}\end{array}\right)= 0~ \mod \Z^2
\end{align*}
for some integer matrix $S$.
As $(m_{st})$ has rank $r$ it follows from Lemma \ref{rank2} that $\text{trdeg}_\C(\hat{x}) \leq 2(e + \tilde{e}) -2r$.
\end{proof}

\section{Proof of Theorem \ref{theorem} and \ref{additiveextensions}}\label{proof}
\begin{proof}[\it{Proof of Theorem \ref{theorem}}]
We will prove Theorem \ref{theorem} by induction on $g + \dim(V)$. If $\dim(V) = 0$, Theorem \ref{theorem} is trivial. So Theorem \ref{theorem} holds for $g + \dim(V) = 1$. \\

For our induction argument we may assume that $\dim V \geq 1$ and that $U, U_0,x,y$ and $H$ are as in section \ref{setting}. Recall that we assume that $U$ is not contained in a universal subextension. We show that this leads to a contradiction. For each elliptic curve $E$ there is up to isogeny exactly one elliptic curve $E'$ such that $E$ is isogenous to $E'^h$. So  we may permute the set $ \{1, \dots, n\}$ such that for odd $i \leq 2m$, the curve $E_i$ is isogenous to $E^h_{i+1}$ while for $2m<i\leq 2m +m'$ the curve $E_i$ is isogenous to $E^h_i$ and the rest are not isogenous to any complex conjugate of any of $E_1, \dots, E_n$. \\

 We then write
\begin{align} \label{product}
\E(A)\times \E(A^h) = G_1 \times \dots \times G_{2m +m'} \times T
\end{align}
with $G_i = \E(E_i)^{g_i}\times\E(E_{j}^h)^{g_{j}}$ where we set $j = i+1$ for odd $ i \leq 2m$ and $j = i-1$ for even $i \leq 2m$ and finally for $2m <i \leq 2m + m'$ we set $j = i$. We denote by $\pi_G$ the projection to a subgroup $G$ appearing as a factor in the product (\ref{product}) of $\E(A)\times\E(A^h)$ and $(d\pi_{G_i})_O$ to mean the projection between the corresponding tangent spaces.\\

We start the proof by first investigating the transcendence degree of $x$ over $\C$.
Note that $H = \tilde{H} \times T$ for some $\tilde{H}$. This is the case since the projection of  $T$ to $A$ is a product of elliptic curves such that no elliptic curve in it is isogenous to the complex conjugate of any other in $A$. So if the image of $H$ by the projection to $T$ were strictly contained in $T$ then the projection of $H$ to $\E(A)$ would be too, contradicting our assumption that $U$ is not contained in a subextension. It follows that
\begin{align}
\dim(H) = \sum_{i=1}^{2m+m'}\dim(\pi_{G_i}(H)) + \dim(T) .
\end{align}

Applying Lemma \ref{ij}, we see that
\begin{align*}
\text{trdeg}_\C(d\pi_{G_i})_O(x) \leq \dim(\pi_{G_i}(H)).
\end{align*}
for  $i \leq 2m$. But we also note that for these $i$ the conjugate $G^h_i$ also turns up in the product $G_1\times \cdots \times G_{2m}$ and that
\begin{align*}
 \text{trdeg}_\C(d\pi_{G_i\times G_i^h})_O(x) \leq \frac12\dim(\pi_{G_i\times G^h_i}(H)).
\end{align*}
And by Lemma \ref{ii} we have
\begin{align*}
\text{trdeg}_\C(d\pi_{G_i})_O(x) \leq\frac12\dim(\pi_{G_i}(H))
\end{align*}
for $2m<i \leq 2m + m'$. The conjugate of each factor of $T$ also appears in $T$ and so
\begin{align*}
 \text{trdeg}_\C(d\pi_{T})_O(x) \leq \frac12\dim(T).
\end{align*}
Thus we deduce that
\begin{align*}
\text{trdeg}_\C(x) \leq \frac12\dim(H).
\end{align*}
If $\dim(H)<2g$ then $\dim \pi_{\E(A)}(H) <2g$ and so $U$ is contained in a universal subextension. So we assume now that $\dim(H)\geq 2g$.

Let $r' = 2g - \frac12\dim(H)$. We write $A$  as a product of (not necessarily distinct) elliptic curves $A =A_1\times \cdots \times A_g$.  We denote by $Q_1,\dots, Q_g $ the projections of $U$ to $\mathbb{E}(A_1),\dots,\mathbb{E}(A_g)$ and by $\tilde{Q}_1,\dots,\tilde{Q}_g$ the projections of $U^h$ to $\mathbb{E}(A_1^h),\dots,\mathbb{E}(A^h_g)$ respectively. There are $r'$ sums appearing on the left of  the relations (\ref{relationH}), (\ref{relation}) defining $H$. These sums give rise to a surjective group homomorphism
\begin{align*}
\sigma:\E(A) \rightarrow G'
\end{align*}
where $G'$ is a universal extension of $r'$ elliptic curves. Let $V'$ be the Zariski-closure of $\sigma(U)$ in $G'$.

We distinguish between three cases. We first assume that
\begin{align*}
\dim(V') < r' \text{ and }\dim (V')<\dim (V).
\end{align*} 														
Let $C'$ be the maximal compact subgroup of $G'$. The map $\sigma$ sends $U$ to a component of $V'\cap C'$.  As $\dim(H) \geq 2g$ it holds that $r' \leq g$ so it follows  by induction that $V'\cap C'$ is contained in a finite union of universal extensions properly contained in $G'$. It follows that $U$ is contained in a proper universal subextension of $\E(A)$ and we derive a contradiction.
Next suppose that 
\begin{align}\label{assumption}
\dim(V') \geq r'.
\end{align} 			
Let $\sigma\circ (Q_1,\dots,Q_g)$ be the composition of $\sigma$ with $(Q_1,\dots,Q_g)$. Composing this with the coordinate functions of the group generates a field  $\C(\sigma\circ(Q_1,\dots,Q_g))$ that has transcendence degree at least $r'$ over $\C$. We similarly define the fields $\C(Q_1,\dots,Q_g), \C(\tilde{Q}_1,\dots \tilde{Q}_g)$. Since  $\C(\sigma \circ (Q_1,\dots,Q_g)) \subset \C(Q_1, \dots, Q_g)\cap \C(\tilde{Q}_1, \dots, \tilde{Q}_{g})$  and both $\C(Q_1, \dots, Q_g)$ and $ \C(\tilde{Q}_1, \dots, \tilde{Q}_g)$ have transcendence degree at most $g$ over $\C$, it follows that
\begin{align*}
\text{trdeg}_\C(Q_1, \dots, Q_g,\tilde{Q}_1, \dots, \tilde{Q}_g  ) \leq 2g- r' = \frac12\dim(H).
\end{align*} 																	
Finally if $\dim(V')=\dim (V)$ then $\text{trdeg}_\C(\sigma\circ(Q_1,\dots,Q_g)) =\dim (V)$. And since both $\C(Q_1, \dots, Q_g)$ and $ \C(\tilde{Q}_1, \dots, \tilde{Q}_g)$ have transcendence degree at most $\dim (V)$, we have 
\[
\text{trdeg}_\C(Q_1, \dots, Q_g,\tilde{Q}_1, \dots, \tilde{Q}_g  ) \leq \dim (V)\le g \le \frac12\dim(H).
\]
Either way we have
\begin{align*}
\text{trdeg}_\C(x,y) \leq \dim(H).
\end{align*}																																												
As we assumed that the dimension of $U$ is at least 1  and that $H$ is minimal we deduce a contradiction from the inequality \eqref{ineq}. Thus $U$ has to be contained in a proper universal subextension of $\E(A)$.
\end{proof}
\begin{proof}[\it{Proof of Theorem \ref{additiveextensions}}]
It follows from the universal property of $\E(A)$ that there exists a group homomorphism $\gamma_2:\E(A) \rightarrow G$ as in (\ref{diagram}). Since the co-kernel of $\gamma_2$ is a vector group and $G$ is anti-affine $\gamma_2$ is surjective. The variety $V^{\gamma_2} = \gamma^{-1}(V)$ has dimension $\dim V + g - l \leq g$ and so the intersection of $V^{\gamma_2}$ with $C$ is contained in a union of translates of universal subextensions of $\E(A)$. The image of these translates by $\gamma_2$ are translates of additive extensions of proper abelian subvarieties of $A$ contained in $G$ and cover the intersection $V\cap C_G$.  
\end{proof}
\section{Effectivity and a refinement of Theorem \ref{theorem}}\label{pfaffian}

In this section we will use several properties of the exponential map. For an elliptic curve $E$ and the model of $\E(E)$ given by \eqref{model} we can take
 \begin{align*}
 \exp_{\E(E)}:\C^2 \rightarrow \E(E) \subseteq \mathbb{P}^4(\mathbb{C})
 \end{align*}
given by
\begin{align}\label{exponential}
(z,w) \rightarrow \begin{cases}
 (1:\wp(z): \wp'(z):\zeta(z) +w: \wp'(z)(\zeta(z)+w)+2\wp(z)^2)& z \notin \Lambda\\
 (0:0:1:0:w+\eta(\omega)) & z= \omega\in \Lambda
 \end{cases}
\end{align}
where $\Lambda$ is the kernel of the exponential of $E$ and $\eta(\omega)$ is the quasiperiod associated to $\omega$ (see for instance \cite[pages 252-3]{CMZ}).
To get the explicit bound in Theorem \ref{iso}, we will use our earlier work \cite{ourpaper}. We briefly recall the relevant definitions.  We say that a sequence $f_1,\ldots,f_l:U\to \R$ of analytic functions on an open set $U$ in $\R^n$ is a \emph{pfaffian chain} if, for $i=1,\ldots,l$ and $j=1,\ldots,n$ there are real polynomials $p_{i,j}$ in $n+i$ indeterminates such that
\[
\d{f_i}{x_j} (x) = p_{i,j}(x,f_1(x),\ldots,f_i(x))
\]
on $U$. We say that a function $f$ is \emph{pfaffian} if there is a polynomial $p$ such that $f$ is $p(x,f_1(x),\ldots,f_l(x))$. We say that $f$ as above has \emph{order} $l$ and degree $(\alpha,\beta)$, where $\alpha$ is a bound on the maximum of the degrees of the $p_{i,j}$ and $\beta$ is a bound on the degree of $p$. To apply the effective estimates due to Khovanskii,  and to Gabrielov and Vorobjov, the domains of the pfaffian functions involved cannot be too complicated. So we say that $U\subseteq \R^n$ is a \emph{simple domain} if $U$ is the image of a product of open intervals under an invertible affine transformation. Now let $\X\subseteq \R^n$. Suppose that $U_i\subseteq \R^n$ are simple domains, for $i=1,\ldots, L$, and that for each $i$ we have pfaffian functions $f_{i,1},\ldots,f_{i,m_i}:U_i\to \R$ with a common chain of order $r$ and degree $(\alpha,\beta)$.  And suppose that $m_i\le M$  and that
\[
\X=\bigcup_{i=1}^L \{ x \in U_i : f_{i,1}(x)=\cdots= f_{i,m_i}(x)=0\}
\]
Then we call $\X$ \emph{piecewise semi-pfaffian}, of \emph{format} $(r,\alpha,\beta, n , L,M)$. We refer to the entries of the format of a piecewise semi-pfaffian set as the $r$-entry, $\alpha$-entry, and so on. 

With the definitions out of the way, we can give the details of the proof of Theorem \ref{iso}.
\begin{proof}[\textit{Proof of Theorem \ref{iso}}]
For an elliptic curve $E$ with universal extension $\E(E)$ with exponential as in \eqref{exponential} we fix periods $\omega_1, \omega_2$  such that $\omega_2/\omega_1$ lies in the standard fundamental domain in the upper half plane and let $F^0$ be the fundamental parallelogram spanned by $\omega_1, \omega_2$ but deprived of 0 so $F^0 = \{r_1\omega_1 +r_2\omega_2: r_1,r_2 \in [0,1), r_1^2 + r_2^2 \neq 0\}$. We will work with the affine chart $\E(E)^0$ of $\E(E)$ defined by $X_0\neq0$.  The exponential of $\E(E)$ restricted to $F^0\times \C$ maps onto $\E(E)^0$, as described in (\ref{exponential}). \\
By the proof of \cite[Theorem 11]{ourpaper} the graph $\Gamma(\wp,\zeta)\subseteq \C^3$ of $(\wp,\zeta)$ where $\wp,\zeta$ are restricted to $F^0$ is a piecewise semi-pfaffian set of format $(9,9,1,6,144503,4)$. We define $b_1, b_2 $ to be the Betti coordinates of $z$ with respect to $\omega_1, \omega_2$ which are degree 1 polynomials in the real and imaginary part of $z$.  Further let $\eta_1, \eta_2$ be the quasi-periods of $\zeta$ associated to $\omega_1, \omega_2$.  We identify $\C$ and $\R^2$ in the usual way (as we shall do throughout this proof). Then using the piecewise semi-pfaffian description of $\wp$ and $\zeta$ mentioned above, we see that
\begin{equation}\label{XG}
\begin{split}
\X_E  =&\Big\{\left(z,w,X_1,X_2, X_3, X_4\right)\in \C^6:
 (z,X_1,X_3 -w) \in \Gamma(\wp,\zeta), \\
  &X_2^2 - 4X_1^3-g_2X_1-g_3 = 0,  X_4- X_3 X_2 +2X_1^2=0,  w -b_1\eta_1 -b_2\eta_2=0\Big\}
\end{split}
\end{equation}
is a piecewise semi-pfaffian set. Here the first two equations define the model of $\E(E)$ and the last makes sure that $\X_E $ is  contained in the maximal compact subgroup of $\E(E)$. As a piecewise semi-pfaffian set, $\X_E$ has format $(9,9,3,12,144503,10)$ as we have added $6$ additional equations that involve polynomials of degree at most $3$ to the definition of $\Gamma (\wp,\zeta)$. \\

We embed $\E(A) =\prod_{k=1}^g\E(E_k)$ into multiprojective space $(\mathbb{P}^4)^g$ by embedding each $\E(E_k)$ into $(\mathbb{P}^4)^g$ as in (\ref{model}). We denote by $X_{0k},X_{1k}, X_{2k}, X_{3k}, X_{4k}$ the projective coordinates of $\E(E_k)$ and   set $\E(E_k)^0$ to be the affine chart of $\E(E_k)$ defined by $X_{0k}\neq 0$.
The projection of each point in the intersection of $V\cap C$ to $\E(E_k)$ is either equal to the identity element or lies on the chart defined by $X_0\neq 0$. Thus the intersection $V\cap C$ lies in the union of the $2^g$ sets defined by these conditions.  The intersection of $V$ with such a set is given by, for each $k$, either specializing $X_{0k}=1$ or specializing $X_{0k},X_{1k}, X_{2k}, X_{3k}, X_{4k}$ to the identity element of $\E(E_k)$ in the equations defining $V$.

 For each $k=1,\ldots,g$ we form the set $\X_{E_k}$ as in \eqref{XG}. For each subset $S$ of  $\{1,\dots, g\}$ we form the cartesian product $\X_S= \prod_{k\in S}\X_{E_k}$. These products are piecewise semi-pfaffian sets of format $$(9|S|,9|S|,3,12|S|,144503^{|S|}, 10|S|)$$ with $|S|\leq g$. For each such $S$ we consider the real and imaginary part of the  polynomials defining $V$ and set $X_{0k}=1$ for $k\in S$ and $X_{0k},X_{1k}, X_{2k}, X_{3k}, X_{4k}$ equal to the identity element otherwise. The resulting polynomials have total degree bounded by $\delta$. We add those equations to the definition of $\prod_{k\in S} \X_{E_k}$ and obtain a set $V_S$ that is contained in the union of  $144503^g$  zero sets of  pfaffian functions of order $9g$ and degree $(9g,\max\{3,\delta\})$. Each isolated point of $V\cap C$ lies in the projection of exactly one connected component of such a set $V_S$ to $\E(A)$. The number $N_{\text{iso}}$ of isolated points in $V\cap C$ is thus bounded by $2^g$ times a bound for the number of connected components of $V_S$. Using  Khovanskii's bounds \cite[Corollary 3.3]{GV} we obtain
\begin{align*}
N_{\text{iso}}\leq 2^{42g^2+126g }g^{30g}\max\{ 3,\delta\}^{21g}.
\end{align*}

 \end{proof}

Now we quickly show that Theorem \ref{iso} implies an explicit bound for $N$  if $V$ has dimension at most 2.
\begin{thm} \label{surface}   For $V\subseteq \E(A)$ an irreducible surface and $g\geq 2$ the intersection $V\cap C$ is finite or $V$ is equal to a translate of a universal subextension.
\end{thm}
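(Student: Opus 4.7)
The plan is to combine Theorem \ref{iso} with Theorem \ref{theorem}, and then run an induction on $g$. First I would suppose for contradiction that $V\cap\mathcal{C}$ is infinite yet $V$ is not a translate of any universal subextension. By Theorem \ref{iso} the set of isolated points of $V\cap\mathcal{C}$ is finite, so there must exist a connected component $\Sigma\subseteq V\cap\mathcal{C}$ of positive real dimension.

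Next I would invoke Theorem \ref{theorem} to cover $V\cap\mathcal{C}$ by finitely many translates $t_1+H_1,\dots,t_N+H_N$ with $t_i\in\mathcal{C}$ and each $H_i$ a proper universal subextension of $\mathcal{G}$. The analytic cell decomposition in $\mathbb{R}_{\text{an}}$ together with the analytic continuation argument already used in the reduction to Proposition \ref{prop} in section \ref{setting} ensure that the connected analytic manifold $\Sigma$ is contained in a single such translate, say $t+H$. Writing $W$ for the Zariski closure of $\Sigma$ in $\mathcal{G}$, one has $W\subseteq V\cap(t+H)$ with $\dim_\C W\geq 1$ since $\Sigma$ has positive real dimension. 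As $V$ is irreducible of complex dimension $2$, the only possibilities are $\dim_\C W=1$ or $W=V$. The first is excluded by the fact, noted in the paper after Theorem \ref{iso}, that curves in $\mathcal{G}$ intersect $\mathcal{C}$ only in isolated points: an irreducible curve component of $W$ would contain a positive-dimensional real-analytic piece of $\Sigma$. Hence $W=V$ and $V\subseteq t+H$.

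Finally, I would close the argument by induction on $g$. For the base case $g=2$, any proper universal subextension of $\mathcal{G}$ has complex dimension at most $2$, and here $\dim H=2$ since $V\subseteq t+H$ forces $\dim H\geq\dim V=2$. Irreducibility then promotes the inclusion $V\subseteq t+H$ to equality, so $V$ is a translate of a universal subextension, contradicting the assumption. For $g>2$, the group $H$ is itself the universal extension of a strictly smaller product $\mathcal{E}'$ of the elliptic curves $E_j$. Writing $\mathcal{C}_H=\mathcal{C}\cap H$ for its maximal compact, $(V-t)\cap\mathcal{C}_H$ is infinite since $V\cap\mathcal{C}=t+(V-t)\cap\mathcal{C}_H$. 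If $\dim H=2$ we conclude as in the base case; otherwise we apply Theorem \ref{surface} inductively to $V-t$ viewed inside $H$ to deduce that $V-t$ is a translate of a universal subextension of $H$, which is also a universal subextension of $\mathcal{G}$, again contradicting our assumption.

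The step I expect to require the most care is the global inclusion $\Sigma\subseteq t+H$: starting from the local inclusion of a neighbourhood of a smooth point of $\Sigma$ in some $t_i+H_i$, one must verify that the finite cover supplied by Theorem \ref{theorem} cannot allow the connected component $\Sigma$ to straddle two distinct translates. As in section \ref{setting}, this is a direct consequence of analytic continuation together with the analytic cell structure of $V\cap\mathcal{C}$ in $\mathbb{R}_{\text{an}}$, so no new input beyond what has already been developed in the paper is needed.
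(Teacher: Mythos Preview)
Your approach is essentially the paper's: contradiction, Theorem~\ref{theorem}, the curve case from \cite{CMZ}, and induction on $g$. The paper's write-up is slightly more direct: rather than trying to place the whole component $\Sigma$ inside one translate, it simply observes that if every algebraic intersection $V\cap(t_i+H_i)$ had dimension at most $1$, then each would be a curve and hence meet $\mathcal{C}$ only finitely by \cite[Theorem~1]{CMZ}, contradicting the existence of a positive-dimensional component; so some $V\cap(t_i+H_i)$ has dimension $2$, and irreducibility gives $V\subseteq t_i+H_i$.

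The one genuine soft spot in your version is the claim $\Sigma\subseteq t+H$. The argument in section~\ref{setting} shows that each \emph{analytic cell} of $V\cap\mathcal{C}$ lies in a single translate (local inclusion plus analytic continuation along a connected analytic manifold), but a connected component $\Sigma$ is in general a union of several cells, possibly glued along lower-dimensional strata, and nothing there prevents distinct cells from sitting in distinct translates. So the global inclusion is not a direct consequence of what section~\ref{setting} establishes. The fix is immediate: replace $\Sigma$ by a single top-dimensional cell $M\subseteq\Sigma$; then $M\subseteq t+H$ is exactly what section~\ref{setting} gives, and your Zariski-closure argument goes through verbatim with $W=\overline{M}^{\,\mathrm{Zar}}$. (A posteriori $\Sigma\subseteq t+H$ does hold, since you end up with $V\subseteq t+H$.) Also, you do not need Theorem~\ref{iso} to produce $\Sigma$: finiteness of connected components (compactness of $\mathcal{C}$, or o-minimality) already forces some component to be infinite and hence positive-dimensional.
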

\begin{proof} We argue by contradiction. Suppose that $V$ is not a translate and that there is a positive dimensional component $Y$ of $V\cap C$. By Theorem \ref{theorem} we have that $Y$ is contained in a finite union of translates of universal subextensions. If the dimension of the intersection of $V$ with a translate is smaller than 2 then, by Theorem \ref{theorem} again (due to Corvaja, Masser and Zannier in this case \cite[Theorem 1]{CMZ}) the intersection is finite since every positive dimensional translate has dimension at least 2. So there has to be a translate $c+H$ for which the dimension is 2. As $V$ is irreducible, $V$ is contained in that translate. Thus $V-c$ is contained in $H$ and $Y-c$ is a positive dimensional component of $V$ intersected with the maximal compact subgroup of $H$. If $\dim(H)\ge4 $ we can iterate this argument for $V-c$ until the dimension of $H$ is equal to 2. But if $V$ is contained in a translate of dimension 2 then $V$ is equal to $H$ again obtaining a contradiction.  
\end{proof}

Finally we prove the effective version of Theorem \ref{theorem}. We will again use our earlier work but we will also need the stratification theorem of  Gabrielov-Vorobjov. We assume that the reader is familiar with the definitions from \cite{GVstrat}.
\begin{proof}[\textit{Proof of Theorem \ref{quant}}]

Here and also below the constants $c_i$ depend only on $g$, and can be computed from $g$.

Let $V_S$ be as in the proof of Theorem \ref{iso}. As we mentioned there it is a piecewise semi-pfaffian set and  the entries of its format are bounded by $c_1$, with the exception of the third entry, which is bounded by $c_1\delta$.

We want to write the intersection $V\cap C$ as a union of $N'$ connected analytic manifolds. It follows from Proposition \ref{prop} and the comments preceding it that this $N'$ will then be a bound for the $N$ in the statement. Because the projection from $\X_S$ to the maximal compact subgroup is an analytic isomorphism, it is enough to show that $V_S$ is the union of some number, $N''$, of connected analytic manifolds, and to give a bound on $N''$. (To get a bound for $N'$ we then have to multiply $N''$ by $2^g$.)

Since $V_S$ is a piecewise semi-pfaffian set, with the bounds on format mentioned above, it is a union of $c_1$ elementary semipfaffian sets, $\X_\nu$ say. These elementary semi-pfaffian sets have format entries bounded by $c_1$, again expect the $\beta$-entry which is bounded by $c_1\delta$. It is enough to show that each of these elementary semi-pfaffian sets can be written as a union of a controlled number of connected analytic manifolds. This we will do using the Gabrielov-Vorobjov stratification theorem (\cite[Theorem 2, page 82]{GVstrat}). We apply this theorem to each $\X_\nu$ in turn. For each $\nu$ the theorem gives a stratification of $\X_\nu$ into at most $c_2\delta^{c_3}$ smooth strata. Moreover, each of these strata has format bounded (entrywise) by $(c_6\delta^{c_7},c_5,c_5,c_5,c_6\delta^{c_7})$. (This means that the number of equations and the degrees of the polynomials in the pfaffian functions are bounded by $c_6\delta^{c_7}$, everything else by $c_5$.) To finish it is enough to bound the number of connected components of these strata. To do this, we add extra variables to remove the inequalities involved in the definitions of the strata. So, for each inequality of the form $g(x)>0$ we add variables $y,z$ and equations $g(x)-y^2=0, yz=1$. So we add $2\cdot c_5$ variables, and $2\cdot c_5$ equations, and write our stratum as the projection of a zero set of set of pfaffian functions in the higher-dimensional space given by the extra variables. By Khovanski's theorem (see \cite[Corollary 3.3]{GV}), this pfaffian set will have at most $c_8\delta^{c_9}$ connected components. Combining the estimates gives our bound:
\[
N\le c_1\cdot c_2 \cdot
c_8 \cdot\delta^{c_3+c_9}.
\]
\end{proof}
If the computation is carried out using the more explicit bounds given by Khovanskii, and by Gabrielov and Vorobjov, then it can be shown that both the constants in the exponent of $\delta$ have the form $(cg)^{c'g}$ for absolute effective constants $c$ and $c'$.  \\

\textbf{Funding}\\
 The main bulk of this work was done while the second author was at the University of Oxford as a fellow of the Swiss National Science Foundation. He is grateful for the hospitality of the institute and the generosity of the  Swiss National Science Foundation.  Both authors thank the Engineering and Physical Sciences Research Council for support under grant  EP/N007956/1.\\

\textbf{Acknowledgments}\\
The second author would like to thank his PhD advisor David Masser for his patience while answering the authors questions on maximal compact subgroups. He would also like to thank Daniel Bertrand for his helpful comments on the exact formulation of his transcendence result. Both authors thank Philipp Habegger for helpful discussions as well as Pietro Corvaja and Umberto Zannier for their interest in our work. Finally, we are most grateful to the referees for their many comments and suggestions, which have significantly improved the paper.

\end{document}